\documentclass[a4paper]{amsart}
\usepackage[margin={100pt, 100pt}, centering]{geometry}
\usepackage[utf8]{inputenc}
\usepackage[T1]{fontenc}
\usepackage{textcomp}
\usepackage{amsmath, amssymb}
\usepackage{amsthm}
\usepackage{mathrsfs}
\usepackage{esint}
\usepackage{float}
\usepackage{import}
\usepackage{xifthen}
\usepackage{pdfpages}
\usepackage{transparent}
\usepackage{hyperref}
\usepackage{subcaption}
\usepackage{caption}
\usepackage[most]{tcolorbox}
\usepackage[shortlabels]{enumitem}
\newcommand{%
	\begingroup
	\fontsize{7pt}{12pt}
	
	\import{./figures/}{.pdf_tex}
	\endgroup
}[2][0.8]{%
	\begingroup
	\fontsize{7pt}{12pt}
	
	\import{./figures/}{#2.pdf_tex}
	\endgroup
}

\newtheorem{theorem}{Theorem}

\newtheorem{restatetheorem}{Theorem}

\newtheorem{problem}{Problem}[section]
\newtheorem{proposition}[problem]{Proposition}
\newtheorem{corollary}[problem]{Corollary}

\newtheorem{remark}[problem]{Remark}

\theoremstyle{definition}
\newtheorem{definition}[problem]{Definition}
\pdfsuppresswarningpagegroup=1

\newtcbox{\redbox}{colback=red!5!white,colframe=red!75!black, tcbox raise base}

\newcommand{\grad}{\nabla}
\newcommand{\R}{\mathbb R}
\newcommand{\T}{\tilde}
\newcommand{\bd}{\partial}
\newcommand{\lap}{\Delta}
\renewcommand{\div}{\operatorname{div}}
\newcommand{\RP}{\mathbb{RP}}

\begin{document}
\title{The Yamabe invariant of $\mathbb{RP}^3$ via Harmonic Functions}
\author{Liam Mazurowski}
\address{Department of Mathematics, Cornell University, Ithaca, NY, 14853}
\email{lmm334@cornell.edu}
\author{Xuan Yao}
\address{Department of Mathematics, Cornell University, Ithaca, NY, 14853}
\email{xy346@cornell.edu}

\maketitle
\begin{abstract}
We use harmonic functions to give a new proof of a result of Bray and Neves on the Yamabe invariant of $\RP^3$. 
\end{abstract}

\section{Introduction}

The Yamabe problem asks whether every closed, Riemannian manifold $(M^n,g)$ admits a conformal metric with constant scalar curvature. This is equivalent to finding a solution to $L\varphi = \lambda \varphi^{(n+2)/(n-2)}$ where 
\[
L = \lap -\frac{n-2}{4(n-1)}R_g
\]
is the so-called conformal Laplacian. Yamabe \cite{Yamabe} claimed to have an affirmative answer to the problem, but his proof was later found to have a gap. 
Trudinger \cite{Trudinger} was able to fix the gap in the case where the conformal Yamabe invariant 
\[
Y(M,[g]) = \inf\left\{-\frac{4(n-1)}{n-2}\frac{\int_M \varphi L \varphi \, dV}{\left(\int_M \varphi^6\, dV\right)^{1/3}}:\, \varphi\in H^1(M)\right\} 
\]
satisfies $Y(M,[g]) \le 0$. Later, Aubin \cite{Aubin} showed that in fact  $Y(M,[g]) < Y(S^n,[g_{0}])$ implies existence of a constant scalar curvature metric conformal to $g$. Here $g_0$ is the round metric on $S^n$. Finally, Schoen \cite{Schoen-84} completely solved  the Yamabe problem by showing that $Y(M,[g]) < Y(S^n,[g_0])$, unless $(M,g)$ is conformal to the round sphere. 
    
We now focus attention on 3-dimensional manifolds. Schoen \cite{Schoen-89} defined the $\sigma$-invariant (or smooth Yamabe invariant) of a closed  manifold $M^3$ by 
\[
\sigma(M) = \sup_g Y(M,[g]). 
\]
If follows from the solution to the Yamabe problem that 
\[
 \sigma(S^3) = Y(S^3,g_{0}) = 6(2\pi^2)^{2/3} =: \sigma_1.
\]
It is also known that $\sigma(S^2\times S^1) = \sigma_1$ and that $\sigma(T^3) = 0$. Bray and Neves \cite{bray2004classification} computed the $\sigma$-invariant of $\RP^3$ and showed that 
\[
\sigma(\RP^3) = \frac{\sigma_1}{2^{2/3}} =: \sigma_2. 
\]
They also computed $\sigma(\RP^2\times S^1) = \sigma_2$ and, more generally, showed that any 3-manifold $M$ satisfying certain topological properties has $\sigma(M) \le \sigma_2$. Akutagawa and Neves \cite{Akutagawa-Neves}
computed that $\RP^3 \# (S^2 \times S^1) = \sigma_2$. To the authors' knowledge, no 3-manifold is known to have positive $\sigma$-invariant different from $\sigma_1$ and $\sigma_2$, although Schoen conjectures that 
\[
\sigma(M) = \frac{\sigma_1}{n^{2/3}}
\]
when $M=S^3/G$ and $\vert G\vert=n$.

To compute $\sigma(\RP^3)$, Bray and Neves argue as follows. Given an arbitrary metric $g$ on $\RP^3$, they first construct an associated asymptotically flat metric $g_{AF}$ with vanishing scalar curvature on $\RP^3$ minus a point. In the model case, where $g$ is the round metric, this construction yields the Schwarzschild metric. Then they use inverse mean curvature flow to transport the optimal test function from the  Schwarzschild manifold to the manifold $(\RP^3-\{p\},g_{AF})$. Finally, they use the properties of inverse mean curvature flow, and in particular, the monotonicity of the Hawking mass, to show that the transported test function witnesses $Y(\RP^3,[g]) \le \sigma_2$. 

Recently, harmonic and $p$-harmonic functions have been found to be useful tools for studying manifolds with non-negative scalar curvature. Stern \cite{Stern} first uncovered the connection between harmonic functions and scalar curvature, and used it to give a new proof that $T^3$ admits no metric of positive scalar curvature, among other things.  Since then, many results on non-negative scalar curvature, first proven using minimal surfaces or inverse mean curvature flow, have been given new proofs using harmonic or $p$-harmonic functions. 
For example, new proofs of the positive mass theorem were given by Agostiniani-Mazzieri-Oronzio \cite{Agostiniani-Mazzieri-Oronzio}, Miao \cite{Miao-22}, and Bray-Kazaras-Khuri-Stern \cite{Bray-Kazaras-Khuri-Stern} using harmonic functions. Likewise, new proofs of the Penrose inequality (without rigidity) were given by Agostiniani-Mantegazza-Mazzieri-Oronzio \cite{Agostiniani-Mantegazza-Mazzieri-Oronzio} and Hirsch-Miao-Tam \cite{Hirsch-Miao-Tam-22} using $p$-harmonic functions. 

In this paper, we give a new proof of the Bray-Neves result on the $\sigma$-invariant of $\RP^3$ using harmonic functions. 
We adopt the same topological assumptions as in \cite{bray2004classification}.
\begin{definition}[Property A] 
A $3$-manifold $M^3$ has Property A if $M^3$ is not $S^3$ or a connect sum with an $S^2$ bundle over $S^1$ and $M^3$ can be expressed as $P^3\#Q^3$ where $P^3$ is prime and $Q^3$ is orientable.
\end{definition}

\begin{definition}[Property B]\label{property-B} 
    A $3$-manifold $M^3$ has Property B if $M^3$ is not $S^3$ or a connect sum with an $S^2$ bundle over $S^1$ and $M^3$ can be expressed as $P^3\#Q^3$ where $P^3$ is prime and $\alpha(Q_3)=2$, where $\alpha(Q)$ is the supremum of the Euler characteristic of the boundary (not necessarily connected) of all smooth connected regions (with two-sided boundaries) whose complements are also connected.
\end{definition}

\begin{remark}
    Property A implies Property B, see Lemma 2.10 in \cite{bray2004classification}. 
\end{remark}
Our main result is a new proof of the following theorem originally due to Bray and Neves. 

\begin{theorem}
\label{main-theorem} 
Assume that $M$ is a closed 3-manifold satisfying either Property A or Property B. Then $\sigma(M) \le \sigma_2$. 
\end{theorem}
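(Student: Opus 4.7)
The plan is to retrace the conformal blow-up of Bray-Neves but replace their inverse mean curvature flow step with harmonic function techniques in the spirit of Stern and of Bray-Kazaras-Khuri-Stern. Fix an arbitrary metric $g$ on $M$; we may assume $Y(M,[g]) > 0$, since otherwise the inequality $Y(M,[g]) \le \sigma_2$ is automatic. In that case the conformal Laplacian $L_g$ is positive, so for each $p \in M$ there is a positive Green's function $G_p$ with pole at $p$, and $g_{AF} := G_p^{4} g$ is scalar-flat and asymptotically flat on $M \setminus \{p\}$. By conformal invariance of the Yamabe quotient it suffices to exhibit, on $(M\setminus\{p\}, g_{AF})$, a test function $\varphi$ whose Yamabe quotient is at most $\sigma_2$.

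Next, I would invoke Property A or Property B, together with the asymptotic flatness of $g_{AF}$, to extract a connected, smooth, outermost minimal surface $\Sigma \subset M \setminus \{p\}$ with $\chi(\Sigma) = 2$, i.e.\ a minimal two-sphere or $\RP^2$. This is precisely what the condition $\alpha(Q) = 2$ is designed to deliver: the prime decomposition $M = P \# Q$ provides a separating sphere together with a non-trivial homology class in the end containing $p$, which via standard Meeks-Simon-Yau / geometric measure theory arguments forces a two-sided minimal surface of the required Euler characteristic. Let $E$ denote the unbounded connected component of $(M \setminus \{p\}) \setminus \Sigma$.

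On the asymptotically flat manifold with boundary $(E, g_{AF})$, I would solve the Dirichlet problem for a harmonic function $u$ with $u = 0$ on $\Sigma$ and $u \to 1$ at infinity, and then analyze its level sets $\Sigma_t = \{u = t\}$. The Bochner formula applied to $|\grad u|$, combined with the Gauss-Bonnet theorem on $\Sigma_t$ exactly as in the Stern-type proofs of the positive mass theorem, yields a monotonicity formula for a Hawking-mass-like functional along $\{\Sigma_t\}_{t \in [0,1]}$. The crucial point is that $\chi(\Sigma) = 2$, rather than $\chi = 0$ or a larger value, produces the topological term that matches the Schwarzschild-$\RP^3$ model; this is what yields the factor of $2^{-2/3}$ in $\sigma_2$ in place of the $S^3$ constant $\sigma_1$. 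Integrating the monotonicity from $\Sigma$ to infinity relates the area of $\Sigma$ to the ADM mass of $g_{AF}$.

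Finally, I would construct the desired test function by transplanting the optimal test function of the half-Schwarzschild model — the conformal $\mathbb{Z}_2$-quotient of Schwarzschild realizing $\sigma_2$ — using the harmonic potential $u$ as a radial substitute. Writing $\varphi = f(u) \cdot G_p$ for an explicit profile $f$ chosen to match the Schwarzschild minimizer, the Stern-type monotonicity then bounds $\int_M \varphi L_g \varphi$ in terms of $\left(\int_M \varphi^6\right)^{1/3}$ with precisely the Schwarzschild-$\RP^3$ ratio, giving $Y(M,[g]) \le \sigma_2$. I expect the hard part to be this last step: producing the sharp constant rather than an inequality off by a multiplicative factor requires matching the boundary contribution at $\Sigma$ with the horizon term of the model computation, and carefully handling the regularity of the level sets of $u$ near its critical set. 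The latter should be manageable by approximation via small perturbations of $u$ and Sard-type arguments, following the template of Bray-Kazaras-Khuri-Stern; the former is the conceptual heart of the argument, and is where the hypotheses on $\chi(\Sigma)$ must be used optimally.
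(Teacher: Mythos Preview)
Your outline follows essentially the same architecture as the paper: conformal blow-up via the Green's function of $L_g$, extraction of an outermost minimal sphere using Property~A/B, a harmonic potential on the exterior, and transport of the Schwarzschild-$\RP^3$ test function along its level sets. Where your sketch diverges from the paper is precisely at the step you flag as ``the conceptual heart,'' and there it is too vague to succeed as stated.

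The paper does \emph{not} use a Bochner/Stern-type integrated identity relating $\operatorname{Area}(\Sigma)$ to the ADM mass; that detour would give a Penrose-type inequality but would not by itself produce the sharp constant in the Yamabe quotient. What is actually needed is a \emph{pointwise-in-$t$} comparison: with $w=-\log u$, $\Sigma_t=\{w=t\}$, and $W(t)=\int_{\Sigma_t}|\nabla w|^2$, one must show $W(t)\le \pi(2-e^{-t})^2$ for every $t\ge 0$, with equality exactly on half-Schwarzschild. This comes from Miao's monotone quantity $\mathcal B(t)=e^t(4\pi-W(t))$ together with a sharp initial estimate $W(0)\le\pi$ when $\Sigma$ is minimal. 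The numerator $\int|\nabla\phi|^2$ is then handled by the exponential identity $\int_{\Sigma_t}|\nabla w|\,da = C_0 e^t$, and the denominator $\int\phi^6$ by coarea plus H\"older, $\int_{\Sigma_t}|\nabla w|^{-1}\ge (\int_{\Sigma_t}|\nabla w|)^3/(\int_{\Sigma_t}|\nabla w|^2)^2$, into which the bound on $W(t)$ is inserted. Because each inequality is an equality on the model, the ratio collapses to $\sigma_2$. Your plan does not identify any of these three ingredients, and a generic ``Hawking-mass-like monotonicity'' will not substitute for them.

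Two smaller points. First, $\chi(\RP^2)=1$, not $2$; the surface $\Sigma$ produced here is a two-sphere (this is where $\alpha(Q)=2$ and the outermost condition are used), and the $4\pi$ in $\mathcal B$ depends on that. Second, the test function on $(M_p,g_{AF})$ is simply $\phi=f\circ w$ for the model profile $f$; the factor $G_p$ you insert is redundant once you have already passed to $g_{AF}$.
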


As in \cite{bray2004classification}, this has the following corollary.

\begin{corollary}
The $\sigma$-invariant of $\RP^3$ satisfies $\sigma(\RP^3)=\sigma_2$. 
\end{corollary}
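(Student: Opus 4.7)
The plan is to sandwich $\sigma(\RP^3)$ between $\sigma_2$ and $\sigma_2$, using Theorem \ref{main-theorem} for the upper bound and the round metric for the lower bound.

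First I would verify that $\RP^3$ satisfies Property A. The manifold $\RP^3$ is prime, it is not $S^3$, and it is not a connect sum involving any $S^2$-bundle over $S^1$ (such a connect sum would have nontrivial first Betti number, which $\RP^3$ does not). It admits the trivial decomposition $\RP^3 \cong \RP^3 \# S^3$ with $\RP^3$ prime and $S^3$ orientable. Hence Theorem \ref{main-theorem} applies and gives $\sigma(\RP^3) \le \sigma_2$.

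For the opposite inequality, I would exhibit a single conformal class on $\RP^3$ whose Yamabe invariant equals $\sigma_2$. The natural choice is the round metric $g_0$ obtained as the $\mathbb{Z}_2$-quotient of $(S^3, g_{\mathrm{round}})$ under the antipodal map. Pulling any test function $\varphi \in H^1(\RP^3)$ back to a $\mathbb{Z}_2$-invariant function $\widetilde{\varphi}$ on $S^3$ doubles the Dirichlet energy, the weighted term $\int R\,\varphi^2$, and $\int \varphi^6$, so a short computation gives
\[
Q_{S^3}(\widetilde{\varphi}) = 2^{2/3}\, Q_{\RP^3}(\varphi),
\]
where $Q$ denotes the Yamabe functional. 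Since $Q_{S^3}(\widetilde{\varphi}) \ge Y(S^3, [g_{\mathrm{round}}]) = \sigma_1$, taking the infimum over $\varphi$ yields $Y(\RP^3, [g_0]) \ge \sigma_1 / 2^{2/3} = \sigma_2$. Moreover, the constant test function $\varphi \equiv 1$ attains this value, since $R_{g_0}\cdot \mathrm{Vol}(\RP^3, g_0)^{2/3} = 6\cdot (\pi^2)^{2/3} = 6\pi^{4/3} = \sigma_2$. Hence $\sigma(\RP^3) \ge Y(\RP^3, [g_0]) = \sigma_2$, and combining the two bounds gives the conclusion.

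There is no real obstacle here: once Theorem \ref{main-theorem} is granted, the corollary reduces to verifying Property A for $\RP^3$ and identifying the Yamabe invariant of the round conformal class via the double cover. The substance of the paper lies entirely in the harmonic-function proof of Theorem \ref{main-theorem} itself.
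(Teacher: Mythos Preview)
Your argument is correct and is precisely the standard route the paper defers to \cite{bray2004classification} for: verify Property A for $\RP^3$ so that Theorem \ref{main-theorem} gives $\sigma(\RP^3)\le\sigma_2$, then use the round metric (via the double cover, or equivalently Obata's theorem) to exhibit $Y(\RP^3,[g_0])=\sigma_2$ as the matching lower bound.
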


\subsection{Outline of proof}
We follow the same basic strategy as Bray and Neves, replacing inverse mean curvature flow by the level sets of a harmonic function. Given a manifold $(M^3,g)$ satisfying Property A or Property B, we first construct an associated asymptotically flat metric $g_{AF}$ on $M$ minus a point. This metric has vanishing scalar curvature, and it is equal to Schwarzschild in the model case of round $\RP^3$. Next, we transport the optimal test function from the Schwarzschild manifold to $(M-\{p\},g_{AF})$ along the level sets of a certain harmonic function.  
We then employ a monotonicity formula for harmonic functions
on asymptotically flat manifolds whose rigidity case is the spatial Schwarzschild manifold. 
Using that monotonicity formula in combination with an exponential quantity along the level sets of harmonic functions, we obtain a uniform upper bound of the Yamabe invariant for $M$, which completes the proof of Theorem \ref{main-theorem}.

\subsection{Organization} 
The rest of the paper is organized as follows. In Section \ref{notation}, we describe some notation for working with the level sets of a harmonic function on an asymptotically flat manifold. In Section \ref{monotonicity-section}, we discuss a monotonicity formula holding along the level sets of a harmonic function, which is in some sense analogous to the monotonicity of the Hawking mass. Finally, in Section \ref{main-proof-section}, we give the proof of the main theorem. 

\subsection{Acknowledgements} The authors would like to thank Professor Xin Zhou for his guidance and encouragement, and for many helpful discussions related to the problem. 

\section{Notation for Harmonic Functions}
\label{notation}

Let $(M^3,g)$ be an asymptotically flat manifold with non-empty, connected boundary $\Sigma$. We will always assume that $M$ has non-negative scalar curvature and that $H_2(M,\Sigma)=0$. Recall that for any $p \in [1,\infty)$, the $p$-Laplacian of a function $u$ is defined by 
\[
\lap_p u = \div(\vert \grad u\vert^{p-2}\grad u).
\]
In particular, the $2$-Laplacian is the usual Laplacian, and the 1-Laplacian 
\[
\lap_1 u = \div\left(\frac{\grad u}{\vert \grad u\vert}\right)
\]
computes the mean curvature of the level sets of $u$ (at points where this makes sense). 

Now suppose that for each $p\in (1,2]$, the function $u_p$ solves 
\[
\begin{cases}
\lap_p u_p = 0, &\text{on } M\\
u_p=1, &\text{on } \Sigma\\
u_p\to 0, &\text{at infinity}.
\end{cases}
\]
Moser \cite{Moser-07} observed that the function $w_p = -(p-1)\log(u_p)$ satisfies the equation 
\[
\lap_p w_p = \vert \grad w_p\vert^p. 
\]
Formally, taking the limit of this equation as $p\to 1$ gives $\lap_1 w_1 = \vert \grad w_1\vert$, which is the equation for inverse mean curvature flow. In fact, Moser showed that it is possible to take a weak limit of the functions $w_p$ as $p\to 1$, and this weak limit is a solution to Huisken and Ilmanen's weak inverse mean curvature flow. 

Motivated by this, we will employ the following notation. Let $u$ solve
\begin{equation}
\label{mian-equation}
\begin{cases}
    \lap u = 0, &\text{on } M\\
    u=1, &\text{on } \Sigma\\
    u\to 0, &\text{at infinity}.
\end{cases}
\end{equation} 
Let $w=-\log(u)$, and define the level sets $\Sigma_t = \{w=t\}$. Note in particular that $\Sigma_0 = \Sigma$. Define the quantities
\begin{gather}
\label{WB-equation} W(t) = \int_{\Sigma_t} \vert \grad w\vert^2 \,da
\quad\text{and}\quad
 \mathcal B(t) = e^t(4\pi - W(t)).
\end{gather}
Then $\mathcal B(t)$ coincides with the quantity defined by Miao in \cite{Miao-22}, up to a constant factor and reparameterization. In particular, as proven in \cite{Miao-22}, $\mathcal B(t)$ is a monotone increasing function of $t$.  Moreover, if $\mathcal B(t) = \mathcal B(0)$ for some $t > 0$, then $(M,g)$ is isometric to Euclidean space outside of a round ball (see \cite{Hirsch-Miao-Tam-22} Theorem 1.3(iii)). 

The reader should note that our notation differs from that commonly used in the literature, in which $u$ is the harmonic function on $M$ with $u=0$ on $\Sigma$ and $u\to 1$ at infinity, $t$ parameterizes the level sets of $u$, and the quantity $\mathcal B$ is defined directly in terms of $u$. We have chosen to use our notation because we feel it better facilitates the comparison between $w$ and the inverse mean curvature flow, $W$ and the Willmore energy, and $\mathcal B$ and the Hawking mass. We hope that this will not cause the reader any confusion.

Along the inverse mean curvature flow, area grows exponentially. We end this section by recording the corresponding exponential quantity along the level sets of harmonic functions. 

\begin{proposition}
\label{exponential-quantity}
One has 
\begin{align}
    \int_{\Sigma_t}|\nabla w|\, da=C_0e^t,
\end{align}
where $C_0 = \int_{\Sigma} \vert \grad w\vert\,da$.
\end{proposition}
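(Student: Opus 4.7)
The plan is to reduce the identity to a flux computation for the harmonic function $u$ itself. By the maximum principle $u > 0$ on $M$, so $w = -\log u$ is smooth and $\grad w = -\grad u/u$. On the level set $\Sigma_t = \{u = e^{-t}\}$ this gives $|\grad w| = e^t|\grad u|$, so
\[
\int_{\Sigma_t} |\grad w|\, da = e^t \int_{\Sigma_t} |\grad u|\, da.
\]
It thus suffices to prove that $F(t) := \int_{\Sigma_t}|\grad u|\, da$ is independent of $t$.

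To establish this, I would fix a regular value $t > 0$ of $w$ (these form a full-measure set by Sard's theorem) and apply the divergence theorem to the vector field $\grad u$ on the region $\Omega_t = \{u > e^{-t}\}$. Asymptotic flatness together with $u \to 0$ at infinity implies $\Omega_t$ is relatively compact, with boundary $\Sigma \cup \Sigma_t$. The interior integral vanishes because $\lap u = 0$. Since $\grad u$ is perpendicular to every level set of $u$, the boundary integrals become integrals of $|\grad u|$ with definite signs: on $\Sigma$, where $u$ attains its maximum value $1$, $\grad u$ is aligned with the outward normal to $\Omega_t$, contributing $+\int_\Sigma|\grad u|\, da$; on $\Sigma_t$, the outward normal to $\Omega_t$ points toward decreasing $u$, so $\grad u$ is anti-aligned, contributing $-\int_{\Sigma_t}|\grad u|\, da$. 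Hence $F(t) = F(0)$, and evaluating at $t=0$, where $u \equiv 1$ on $\Sigma$ forces $|\grad w| = |\grad u|$, yields $F(0) = C_0$.

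The only technical subtlety is the possible presence of critical values of $w$, which is handled by Sard's theorem: the identity holds at every regular value of $w$, which is almost every $t$, and extends to all $t$ for which $\Sigma_t$ is a regular level set. There is no substantive obstacle to this argument beyond careful tracking of orientations, which is dictated in both cases by the sign of $u$ on either side of the relevant hypersurface.
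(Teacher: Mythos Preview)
Your proof is correct and takes a genuinely different route from the paper's. The paper differentiates $\int_{\Sigma_t}|\nabla w|\,da$ directly: it computes the mean curvature of $\Sigma_t$ via $H = \Delta w/|\nabla w| - \langle\nabla|\nabla w|,\nabla w\rangle/|\nabla w|^2$, uses $\Delta w = |\nabla w|^2$ to simplify, and then applies the first variation formula to obtain the ODE $\frac{d}{dt}\int_{\Sigma_t}|\nabla w|\,da = \int_{\Sigma_t}|\nabla w|\,da$. You instead factor out the $e^t$ at the outset by passing from $w$ back to $u$, reducing the problem to the constancy of the flux $\int_{\Sigma_t}|\nabla u|\,da$, which follows from a single application of the divergence theorem to the harmonic $u$ on $\{u>e^{-t}\}$. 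Your argument is more elementary---it sidesteps the mean curvature computation and the first variation machinery entirely---and it makes the role of harmonicity completely transparent. The paper's approach, on the other hand, stays in the $w$-variable throughout and fits more naturally into the inverse-mean-curvature-flow analogy the authors are developing (where one expects to differentiate geometric quantities along the flow), but for this particular identity your flux argument is cleaner.
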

\begin{proof}
    This follows from straightforward computation.
    First, compute the mean curvature of the level set, which is denoted as $H$. This gives
    \begin{align*}
        H&=\text{div}\left(\frac{\nabla w}{|\nabla w|}\right)\\
         &=\frac{\Delta w}{|\nabla w|}-\frac{\langle \nabla |\nabla w|,\nabla w\rangle}{|\nabla w|^2}\\
         &=|\nabla w|-\frac{\langle \nabla |\nabla w|,\nabla w\rangle}{|\nabla w|^2}.
    \end{align*}
    Then using the first variation, we obtain
    \begin{align*}
        \frac{d}{dt}\int_{\Sigma_t}|\nabla w|da&=\int_{\Sigma_t}Hda+\int_{\Sigma_t}\frac{\langle \nabla |\nabla w|,\nabla w\rangle}{|\nabla w|^2}da\\
        &=\int_{\Sigma_t}|\nabla w|da,
    \end{align*}
    and the proposition follows.
\end{proof}

\section{Monotonicity Formulas on Schwarzschild}
\label{monotonicity-section}
The goal of this section is to derive a monotonicity formula for harmonic functions on asymptotically flat manifolds, whose equality case is the spatial Schwarzschild manifold.   We note that Miao \cite{Miao-22} has already proven several theorems of this nature. Although he does not state the precise result we need, his arguments can be applied almost verbatim. We include the details for completeness.

The next Proposition is essentially Theorem 7.3 in \cite{Miao-22}, except that we compare $\mathcal B(0)$ to $\mathcal B(t)$ instead of comparing $\mathcal B(0)$ to $\lim_{t\to\infty} \mathcal B(t)$. 

\begin{proposition} 
\label{monotonicity}
Assume that $(M^3,g)$ is a complete asymptotically flat manifold with non-empty, connected boundary $\Sigma$. Assume that $M$ has non-negative scalar curvature and that $H_2(M,\Sigma)=0$. Let $u$ be the harmonic function on $M$ with $u=1$ on $\Sigma$ and $u\to 0$ at infinity. Define $w = -\log(u)$ and let $\Sigma_t = \{w=t\}$. Define $W$ as in (\ref{WB-equation}). 
Then for any $t\ge 0$ one has 
\begin{equation}
\label{monotonicity-equation}
 W(t) \le   \left[e^{-t}\sqrt{W(0)} + (1-e^{-t} )\sqrt{4\pi}\right]^2.
\end{equation}
Moreover, if equality holds for some $t > 0$, then $M$ is isometric to a spatial Schwarzschild manifold (possibly with negative mass) outside some rotationally symmetric sphere. 
\end{proposition}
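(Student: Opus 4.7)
The plan is to show that the quantity $\Phi(t) := e^t(\sqrt{4\pi}-\sqrt{W(t)})$ is non-decreasing in $t$; then $\Phi(t)\ge\Phi(0)$ rearranges to $\sqrt{W(t)} \le e^{-t}\sqrt{W(0)} + (1-e^{-t})\sqrt{4\pi}$, and squaring yields \eqref{monotonicity-equation}. First I compute the first variation: using the formula $\frac{d}{dt}\int_{\Sigma_t} F\,da = \int_{\Sigma_t}\frac{\partial_\nu F + FH}{|\nabla w|}\,da$ applied to $F = |\nabla w|^2$, together with the pointwise identity $\nabla^2 w(\nu,\nu) = |\nabla w|^2 - H|\nabla w|$ (obtained by decomposing $\Delta w = |\nabla w|^2$ tangentially and normally to $\Sigma_t$), one finds
\[
W'(t) \;=\; 2W(t) \;-\; \int_{\Sigma_t} H\,|\nabla w|\,da,
\]
so $\Phi'(t)\ge 0$ reduces to the sharp pointwise-in-$t$ estimate $\int_{\Sigma_t}H|\nabla w|\,da \ge 4W(t) - 2\sqrt{4\pi W(t)}$.

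The heart of the proof is establishing this estimate, which I plan to derive following Miao \cite{Miao-22} essentially verbatim. The standard inputs are the Bochner identity $\tfrac12\Delta|\nabla w|^2 = |\nabla^2 w|^2 + \mathrm{Ric}(\nabla w,\nabla w) + \langle\nabla w,\nabla|\nabla w|^2\rangle$ (repackageable, via $\Delta w = |\nabla w|^2$ and the refined Kato inequality $|\nabla^2 w|^2\ge\tfrac32|\nabla|\nabla w||^2$ valid in dimension three, as the divergence of a non-negative expression once $R_g\ge 0$ is invoked), the Gauss equation $K_{\Sigma_t} = \tfrac12 R_g - \mathrm{Ric}(\nu,\nu) + \det h$, and Gauss-Bonnet $\int_{\Sigma_t}K_{\Sigma_t}\,da \le 4\pi$ on regular level sets (which is where $H_2(M,\Sigma)=0$ enters). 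The essential departure from Miao's Theorem 7.3 is in the final Cauchy-Schwarz step: Miao uses $\int_{\Sigma_t}K_{\Sigma_t}\,da \le 4\pi$ with uniform weight, which only recovers the monotonicity of $\mathcal B(t) = e^t(4\pi - W(t))$; inserting instead a Cauchy-Schwarz weighted by $|\nabla w|$, which pairs the Gauss-Bonnet integrand against $W(t)$ via $(\int_{\Sigma_t}|\nabla w|\,da)^2 \le |\Sigma_t|\cdot W(t)$, trades the area $|\Sigma_t|$ for the sharp factor $\sqrt{4\pi W(t)}$ and produces the required estimate, with equality preserved on the Schwarzschild model.

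With the key estimate in hand, the proof concludes by an ODE comparison. Setting $\phi(t) := \sqrt{W(t)}$, the first variation and the estimate combine to give $\phi'(t) \le \sqrt{4\pi} - \phi(t)$; explicit integration produces $\phi(t) \le e^{-t}\phi(0) + (1-e^{-t})\sqrt{4\pi}$, and squaring yields \eqref{monotonicity-equation}. For the rigidity clause, equality at some $t_0 > 0$ forces saturation of every inequality on $[0,t_0]$: $R_g\equiv 0$ on $\{0\le w\le t_0\}$; the Kato inequality becomes an equality, so each $\Sigma_s$ is totally umbilical; and equality in the weighted Cauchy-Schwarz forces $|\nabla w|$ and $H$ to be constant on each $\Sigma_s$. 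Together these conditions foliate the region by round, totally umbilical, constant-mean-curvature spheres in a scalar-flat metric, and the classical classification of such foliations identifies the region with a spatial Schwarzschild manifold (possibly of negative mass) outside some rotationally symmetric sphere. The main obstacle is the key estimate: Miao's existing monotonicity of $\mathcal B(t)$ gives only the strictly weaker bound $W(t) \le 4\pi - e^{-t}(4\pi - W(0))$, and extracting the sharp square-root exponent in \eqref{monotonicity-equation} requires the refined Cauchy-Schwarz described above.
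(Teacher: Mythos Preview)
Your reduction is correct: the desired inequality is equivalent to the monotonicity of $\Phi(t)=e^t(\sqrt{4\pi}-\sqrt{W(t)})$, and your first–variation computation $W'(t)=2W(t)-\int_{\Sigma_t}H|\nabla w|\,da$ is right, so everything hinges on the slice estimate
\[
\int_{\Sigma_t} H\,|\nabla w|\,da \;\ge\; 4W(t)-2\sqrt{4\pi\,W(t)}.
\]
However, the paper does \emph{not} prove this estimate by a level–set Bochner/Gauss--Bonnet argument with a modified Cauchy--Schwarz. Instead, it applies Miao's basic $\mathcal B$--monotonicity not to $(M,g,u)$ but to a one–parameter family of conformal deformations: for each $k>0$ set $v=u+k(1-u)$, $\bar g=k^{-4}v^4 g$, $\bar u=u/v$; then $\bar u$ is $\bar g$--harmonic, $\bar{\mathcal B}(s)\ge\bar{\mathcal B}(0)$ gives an inequality for $W(t)$ depending on $k$, and the square root in \eqref{monotonicity-equation} emerges from \emph{optimizing over $k$} (the minimizing choice is $k=\sqrt{4\pi/W(0)}$). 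Rigidity is likewise reduced to Miao's Euclidean rigidity for $\bar{\mathcal B}$: equality forces $(M,\bar g)$ to be flat outside a round ball, and undoing the conformal factor $v$ exhibits $(M,g)$ as Schwarzschild.

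The gap in your plan is the ``refined Cauchy--Schwarz'' step. Miao's Bochner + Gauss + Gauss--Bonnet computation, run as you describe, yields $\mathcal B'(t)\ge 0$, i.e.\ $\int_{\Sigma_t}H|\nabla w|\,da\ge 3W(t)-4\pi$. Your target is strictly stronger, since
\[
\bigl(4W-2\sqrt{4\pi W}\bigr)-\bigl(3W-4\pi\bigr)=\bigl(\sqrt{W}-\sqrt{4\pi}\bigr)^2\ge 0,
\]
with equality only when $W=4\pi$. The Gauss--Bonnet input enters Miao's argument as $\int_{\Sigma_t}K_{\Sigma_t}\,da\le 4\pi$ with \emph{uniform} weight; there is no $|\Sigma_t|$ term in that identity to trade away, so the inequality $(\int_{\Sigma_t}|\nabla w|)^2\le |\Sigma_t|\,W(t)$ you invoke has no evident place to enter, and in any case one has no control on $|\Sigma_t|$ in terms of $4\pi$. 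In short, the $\sqrt{W}$ in the sharp bound does not come from a Cauchy--Schwarz on a single slice but from the conformal optimization; if you want a direct differential proof of $\Phi'\ge 0$ you will need a genuinely different identity, not a tweak of Miao's.
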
 

\begin{proof}
For any $k > 0$, define the function $v = u + k(1-u)$. Let $\bar g = k^{-4} v^4 g$ and let $\bar u = u/v$. It is easily verified that $\bar u$ is $\bar g$-harmonic and that $\bar u = 1$ on $\Sigma$ and $\bar u \to 0$ at infinity.  Moreover, $(M,\bar g)$ is still asymptotically flat with non-negative scalar curvature. Let $\bar w = -\log(\bar u)$. Then 
\begin{align*}
\grad \bar w = \frac{\grad v}{v}-\frac{\grad u}{u} = \left(\frac{1-k}{1+k(\frac{1-u}{u})}-1\right)\frac{\grad u}{u} = \left(\frac{k}{v}\right) \grad w. 
\end{align*}
It follows that 
\begin{align*}
\bar {\mathcal B}(s) &= e^s\left(4\pi -\int_{\bar \Sigma_s} \vert \bar \grad \bar w\vert^2 \, d\bar a\right)\\
&= e^s\left(4\pi -\int_{\bar \Sigma_s} \vert  \grad \bar w\vert^2 \, d a\right) = e^s\left(4\pi -\int_{\bar \Sigma_s} \left(\frac{k}{v}\right)^2\vert  \grad  w\vert^2 \, d a\right).
\end{align*}
This is an increasing function of $s \ge 0$. In particular, we have $\bar{\mathcal B}(s) \ge \bar{\mathcal B}(0)$, i.e.,  
\[
e^s\left(4\pi -\int_{\bar \Sigma_s}  \left(\frac{k}{v}\right)^2 \vert \grad  w\vert^2 \, d a\right) \ge 4\pi - k^2 \int_{\Sigma} \vert \grad w\vert^2\, da. 
\]
Note that $e^{-s} = \bar u = u/v$ and that $u$ and $v$ are both constant on $\bar \Sigma_s$. Therefore, for any $t\ge 0$,  the above inequality implies that 
\[
\int_{\Sigma_t} \vert \grad w\vert^2 \,da \le \frac{4\pi(1-u)v}{k^2} +uv  \int_{\Sigma}\vert \grad w\vert^2\, da, 
\]
where, on the right hand side, $u$ and $v$ are equal to their values on $\Sigma_t$.
Elementary calculus shows that the right hand side is minimized for 
$
k = \sqrt{{4\pi}/W(0)},
$
and this choice yields that 
\[
W(t) \le   \left[e^{-t}\sqrt{W(0)} + (1-e^{-t} )\sqrt{4\pi}\right]^2,
\]
which is the desired inequality. 
 
It remains to study the rigidity case. 
Assuming equality holds, we must have $k = \sqrt{4\pi/W (0)}$ and $\bar{\mathcal B}(s) = \bar{\mathcal B}(0)$ for this choice of $k$ and some $s > 0$. By rigidity for $\bar {\mathcal B}$, this implies that $(M,\bar g)$ is isometric to $(\R^3 - B_{r}(0), g_{\text{euc}})$ for some $r > 0$, and moreover that
\[
\frac{u}{u+k(1-u)} = \bar u = \frac{r}{\vert x\vert}. 
\]
This implies that 
\[
u = \frac{k\bar u}{1+(k-1)\bar u} \quad \text{and} \quad v = \frac{k}{1+(k-1)\bar u} =  \frac{k\vert x\vert}{r(k-1)+\vert x\vert}. 
\]
It follows that
\[
g = k^4 v^{-4} \bar g =  \left(1+\frac{r(k-1)}{\vert x\vert}\right)^4 g_{\text{euc}},
\]
and so $(M,g)$ is isometric to a Schwarzschild manifold of mass $2r(k-1)$ outside of a rotationally symmetric sphere. 

Finally, it is easy to check by explicit computation that equality holds on a Schwarzschild manifold.  For a given $m\in \R$ and $r > 0$, consider the metric 
\[
\T g = \left(1+\frac{m}{2\vert x\vert}\right)^4 g_{\text{euc}}.
\]
on $\R^3 - B_r(0)$. If $m$ is negative, then we require $r > \vert m\vert/2$. 
Let $\Sigma = \bd B_{r}$.  The $\T g$-harmonic function with $u=1$ on $\Sigma$ and $u\to 0$ at infinity is 
\[
u(x) = \frac{m+2r}{m+2\vert x\vert}. 
\]
It follows that $
\vert \grad w\vert = \frac{2}{m+2\vert x\vert}$
and hence that 
\[
W(0) = \int_{\Sigma} \vert \T \grad w\vert^2 \, d\T a = \int_{\Sigma} \vert \grad w\vert^2 \, da = \frac{16\pi r^2}{(m+2r)^2}. 
\]
More generally, note that 
\[
\Sigma_t = \left\{\vert x\vert = \frac{e^t(m+2r)-m}{2}\right\}
\]
and so 
\[
W(t) = \int_{\Sigma_t} \vert \grad w\vert^2 \, da=  \frac{4\pi}{(m+2r)^2}\left(m+2r-me^{-t}\right)^2.
\]
One can also compute that
\[
\left[e^{-t} \sqrt{W(0)} + (1-e^{-t})\sqrt{4\pi}\right]^2 = \frac{4\pi}{(m+2r)^2}\left(m+2r-me^{-t}\right)^2,
\]
and so equality holds. 
\end{proof}

In order to effectively apply this monotonicity formula, we need to be able to estimate $W(0)$. Fortunately, this can be done using another theorem of Miao. The following is Corollary 7.1 in \cite{Miao-22}, adapted to our notation.

\begin{proposition}
    \label{initial-estimate}
    With the same notation and assumptions as Proposition \ref{monotonicity}, one has 
    \[
        \left(\frac{1}{\pi} \int_{\Sigma} \vert \grad w\vert^2\, da\right)^{1/2} \le \left(\frac{1}{16\pi}\int_{\Sigma}H^2\, da\right)^{1/2}+1.
    \]
    Moreover, equality holds if and only if $(M,g)$ is isometric to a Schwarzschild manifold outside a rotationally symmetric sphere with non-negative constant mean curvature. 
\end{proposition}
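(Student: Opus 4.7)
The plan is to differentiate the monotonicity inequality (\ref{monotonicity-equation}) at $t = 0$ and combine the result with Cauchy--Schwarz. I would begin by deriving an expression for $W'(t)$. On any level set $\Sigma_t$, the function $w$ is constant and $\partial_\nu w = |\nabla w|$; decomposing $\Delta w = |\nabla w|^2$ into tangential and normal parts yields the boundary identity
\[
\partial_\nu |\nabla w| = |\nabla w|^2 - H|\nabla w|.
\]
The standard first variation formula for surfaces evolving at normal speed $1/|\nabla w|$ then gives
\[
W'(t) = \int_{\Sigma_t}\bigl[2\partial_\nu|\nabla w| + H|\nabla w|\bigr]\, da = 2W(t) - \int_{\Sigma_t} H|\nabla w|\, da.
\]

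Since (\ref{monotonicity-equation}) holds for all $t \ge 0$ with equality at $t = 0$, I may differentiate both sides at $t = 0$ to obtain $W'(0)/(2\sqrt{W(0)}) \le \sqrt{4\pi} - \sqrt{W(0)}$. Substituting the formula for $W'(0)$ rearranges to
\[
\int_\Sigma H|\nabla w|\, da \ge 4W(0) - 4\sqrt{\pi W(0)}.
\]
When $W(0) \ge \pi$, the right-hand side is nonnegative and Cauchy--Schwarz bounds the left-hand side above by $\sqrt{W(0)}\sqrt{\int_\Sigma H^2\, da}$; dividing through by $\sqrt{W(0)}$ gives $\sqrt{\int_\Sigma H^2\, da} \ge 4\sqrt{W(0)} - 4\sqrt{\pi}$, which is equivalent to the claimed inequality. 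When $W(0) < \pi$, the inequality is trivial since its right-hand side already exceeds $1$.

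For the rigidity case, equality requires equality in Cauchy--Schwarz, forcing $H$ to be a constant multiple of $|\nabla w|$ on $\Sigma$, together with equality in the derivative of the monotonicity at $t = 0$. Tracing the latter through the conformal rescaling $\bar g_k = k^{-4}v^4 g$ from the proof of Proposition \ref{monotonicity} with the optimal choice $k = \sqrt{4\pi/W(0)}$, one arrives at stationarity of $\bar{\mathcal B}$ at $s = 0^+$; combined with the rigidity statement of Proposition \ref{monotonicity}, this should force $(M,g)$ to be Schwarzschild outside a rotationally symmetric sphere, and the proportionality of $H$ and $|\nabla w|$ then forces $\Sigma$ to be a rotationally symmetric sphere of constant mean curvature.

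The main obstacle is the rigidity step: the forward inequality is a clean first-variation plus Cauchy--Schwarz computation, but propagating equality from merely the derivative at $t = 0$ back through the family of conformal rescalings in Proposition \ref{monotonicity} to invoke the Schwarzschild rigidity requires some care.
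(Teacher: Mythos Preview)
The paper does not actually prove this proposition; it simply cites Miao's Corollary~7.1. So your argument is being compared against a reference, not a written-out proof.

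Your derivation of the forward inequality is correct and clean. The identity $\partial_\nu|\nabla w| = |\nabla w|^2 - H|\nabla w|$ follows from $\Delta w = |\nabla w|^2$, the first-variation computation of $W'(t)$ is right, and differentiating \eqref{monotonicity-equation} at $t=0$ (valid since the Hopf lemma makes $\Sigma_0$ a regular level set) combined with Cauchy--Schwarz gives exactly the stated bound after the case split on $W(0)\gtrless\pi$. This is a pleasant route that stays entirely within the tools already developed in the paper.

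The rigidity step, however, has a genuine gap---one you yourself flag. Equality in your chain forces (i) equality in Cauchy--Schwarz, giving $H = c|\nabla w|$ on $\Sigma$, and (ii) $W'(0) = F'(0)$. But (ii) is only \emph{first-order} contact between $W$ and the right-hand side of \eqref{monotonicity-equation} at $t=0$; it does not by itself yield $W(t)=F(t)$ for some $t>0$, which is what Proposition~\ref{monotonicity}'s rigidity clause requires. Translating back through the conformal family $\bar g_k$, you would only obtain $\bar{\mathcal B}_{k^*}'(0)=0$, and the monotonicity $\bar{\mathcal B}'\ge 0$ typically comes from a pointwise nonnegative integrand on $\bar\Sigma_s$; vanishing at $s=0$ constrains the geometry \emph{of $\Sigma$} (umbilicity, constant $|\nabla w|$) but does not propagate to force global Schwarzschild structure. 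To close this, you would need either a sharper ``infinitesimal'' rigidity statement for $\mathcal B$ at $s=0^+$ (which is not provided in the paper and would have to be extracted from the proof in \cite{Miao-22} or \cite{Hirsch-Miao-Tam-22}), or an independent argument. As written, the rigidity direction is incomplete.
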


This has the following corollary.

\begin{corollary}\label{minimal-initial-data-estimate}
    With the same notation and assumptions as Proposition \ref{monotonicity}, assume in addition that $\Sigma$ is minimal. Then 
    \[
        W(t) \le \pi\left(2-e^{-t}\right)^2
    \]
    for all $t\ge 0$. Equality holds for some $t \ge 0$ if and only if $(M,g)$ is isometric to a spatial Schwarzschild manifold with positive mass and $\Sigma$ is the horizon. 
\end{corollary}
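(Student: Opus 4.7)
The plan is to combine the two preceding propositions, exploiting that the minimality hypothesis eliminates the mean-curvature term. Since $\Sigma$ is minimal, $H \equiv 0$ on $\Sigma$, and Proposition \ref{initial-estimate} collapses to
\[
\left(\frac{1}{\pi}\int_{\Sigma} |\grad w|^2\, da\right)^{1/2} \le 1,
\]
i.e., $W(0) \le \pi$. Substituting into the monotonicity estimate \eqref{monotonicity-equation} and using that the right-hand side $\left[e^{-t}\sqrt{W(0)} + (1-e^{-t})\sqrt{4\pi}\right]^2$ is increasing in $W(0)$ yields
\[
W(t) \le \left[e^{-t}\sqrt{\pi} + (1-e^{-t}) \cdot 2\sqrt{\pi}\right]^2 = \pi(2 - e^{-t})^2,
\]
which is the desired inequality.

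For the rigidity statement, I would suppose $W(t_0) = \pi(2-e^{-t_0})^2$ for some $t_0 \ge 0$ and trace back through the chain. Whether $t_0 = 0$ or $t_0 > 0$, both inequalities above must be saturated; in particular, $W(0) = \pi$, which realizes equality in Proposition \ref{initial-estimate} (the mean-curvature contribution vanishes, so this is the only way to attain the bound). The rigidity clause of that proposition then says $(M,g)$ is isometric to a Schwarzschild manifold outside the rotationally symmetric sphere $\Sigma$, and $\Sigma$ has non-negative constant mean curvature. Since $\Sigma$ is minimal, this constant must be zero, and in the Schwarzschild metric of mass $m$ a coordinate sphere $\{|x|=r\}$ is minimal if and only if $r = m/2$; this forces $m > 0$ and identifies $\Sigma$ with the horizon. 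Conversely, the explicit computation at the end of the proof of Proposition \ref{monotonicity} shows that equality is attained on Schwarzschild with boundary the horizon.

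The main subtlety is to invoke the rigidity part of Proposition \ref{initial-estimate} (rather than that of Proposition \ref{monotonicity}) when characterizing the equality case, since the latter only identifies $(M,g)$ with Schwarzschild outside some unspecified rotationally symmetric sphere, whereas the former pins down $\Sigma$ itself as the distinguished sphere. Minimality of $\Sigma$ then completes the identification of $\Sigma$ with the horizon and forces the mass to be positive; no real calculation beyond bookkeeping is required.
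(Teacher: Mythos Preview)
Your proof is correct and follows essentially the same approach as the paper: use minimality to reduce Proposition~\ref{initial-estimate} to $W(0)\le\pi$, substitute into \eqref{monotonicity-equation}, and for rigidity trace equality back to $W(0)=\pi$ and invoke the rigidity clause of Proposition~\ref{initial-estimate}. You supply a bit more detail than the paper does---noting that the right-hand side of \eqref{monotonicity-equation} is increasing in $W(0)$, and spelling out why a minimal rotationally symmetric sphere in Schwarzschild forces $r=m/2$ and hence $m>0$---but the argument is the same.
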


\begin{proof}
    Since $\Sigma$ is minimal, Proposition \ref{initial-estimate} implies that $W(0) \le \pi$. Substituting this into equation (\ref{monotonicity-equation}) and simplifying shows that 
    \[
    W(t) \le \pi(2-e^{-t})^2. 
    \]
    If equality holds for some $t \ge 0$ then $W(0)=\pi$, and so Proposition \ref{initial-estimate} implies that $(M,g)$ is isometric to a spatial Schwarzschild manifold with $\Sigma$ as the horizon. 
\end{proof}
\section{Application of the monotonicity formula}
\label{main-proof-section}

In this section, we use Corollary \ref{minimal-initial-data-estimate} to give a new proof of Bray-Neves' result \cite{bray2004classification}.
We begin by introducing some definitions and basic facts.

Suppose $(M^3,g)$ is a compact $3$-manifold with $Y(M,[g])>0$, and $M$ satisfies Property B \ref{property-B}. Then it cannot be conformally equivalent to 
the $3$-sphere with round metric, hence the Yamabe constant can be achieved by some positive constant scalar curvature metric $g_0\in [g]$.
Working inside $(M,g_0)$ now, we define the conformal Laplacian operator as
\begin{align}
    L_0\equiv \Delta_{g_0}-\frac{1}{8}R_{g_0},
\end{align}
where $R_{g_0}$ is the constant positive scalar curvature of $(M,g_0)$.

Now choose any $p\in M$, and define $G_p(x)$ to be the Green's function of $(M,g_0)$ if it satisfies
\begin{align}
    L_0G_p(x)=0,\quad \forall x\in M\setminus\{p\},
\end{align}
and
\begin{align}
    \lim_{q\to p}d(p,q)G_{p}(q)=1.
\end{align}  
The Green's function exists and it is positive from the positivity of $R_0$ and the Maximal Principal.

Let $g_{AF}=G_p(x)^4g_0$ on $M\setminus \{p\}$. If we denote $M_p=M\setminus \{p\}$, then $(M\setminus\{p\},g_{AF})$ is the asymptotically 
flat $3$-manifold with zero scalar curvature, and $p$ is its infinity.
Note that
\begin{align}
    Y(M,[g])=Y(M,[g_0])=Y(M_p,[g_{AF}]),
\end{align}
and by definition
\begin{align}
    Y(M_p,[g_{AF}])=\inf\left\{\frac{\int_{M_p}8|\nabla \phi|^2dV}{(\int_{M_p}\phi^6dV)^{1/3}}: \phi\in H_{c}^1(M_p,g_{AF})\right\}.
\end{align}

\begin{remark}
    It is actually sufficient to assume that $\phi\in H^1_{\text{loc}}(M_p,g_{AF}) \cap L^6(M_p,g_{AF})$ with $\lim_{x\to \infty} \phi(x)\vert x\vert^{1/2} = 0$ when taking the above infimum. 
\end{remark}

We can now give the proof of the main theorem. 

\begin{restatetheorem}
Assume that $M$ is a closed 3-manifold satisfying either Property A or Property B. Then $\sigma(M) \le \sigma_2$. 
\end{restatetheorem}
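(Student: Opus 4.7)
The plan is to follow the Bray--Neves strategy, replacing inverse mean curvature flow by the level sets of the harmonic function introduced in Section \ref{notation}. The inequality is trivial when $Y(M,[g])\le 0$ since $\sigma_2>0$, so I may assume $Y(M,[g])>0$ and pick a constant scalar curvature representative $g_0\in[g]$ (which exists by the Yamabe resolution). Choosing a base point $p\in M$, I form the asymptotically flat, scalar-flat metric $g_{AF}=G_p^4g_0$ on $M_p=M\setminus\{p\}$; by conformal invariance $Y(M,[g])=Y(M_p,[g_{AF}])$, so it suffices to exhibit a test function $\phi$ on $M_p$ satisfying $\int 8|\grad\phi|^2\,dV\big/(\int\phi^6\,dV)^{1/3}\le\sigma_2$.

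Exactly as in \cite{bray2004classification}, Property A or B is used to produce a connected stable minimal $2$-sphere $\Sigma\subset(M_p,g_{AF})$ separating $M_p$ into a compact piece and an asymptotically flat piece on which $H_2=0$ relative to $\Sigma$. On this latter piece I solve (\ref{mian-equation}) for the harmonic $u$ with $u=1$ on $\Sigma$ and $u\to 0$ at infinity, and set $w=-\log u$, $\Sigma_t=\{w=t\}$. Since $\Sigma$ is minimal, Corollary \ref{minimal-initial-data-estimate} gives the monotone bound $W(t)\le\pi(2-e^{-t})^2$, and Proposition \ref{exponential-quantity} gives $\int_{\Sigma_t}|\grad w|\,da=C_0e^t$. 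These are the only geometric ingredients the argument uses.

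The heart of the proof is the choice of test function. Transporting the Yamabe minimizer on round $\RP^3$ (the constant function) through the model Schwarzschild manifold of mass $m=2r$ (the case in which $W(0)=\pi$ saturates Corollary \ref{minimal-initial-data-estimate}), and re-expressing the resulting conformal factor in terms of $u=e^{-w}$, leads me to $\phi=F(w)$ with
\[
F(t)=\frac{e^{-t}(2-e^{-t})}{\sqrt{e^{-2t}-2e^{-t}+2}}.
\]
I would extend this by the constant $F(0)=1$ on the compact side of $\Sigma$ to obtain a global test function. Since $F(t)\sim\sqrt{2}\,e^{-t}$ at infinity while $|x|\sim e^t$ in the asymptotic chart, $\phi|x|^{1/2}\to 0$ and $\phi$ is admissible in the sense of the remark following (\ref{mian-equation}).

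By the coarea formula and Proposition \ref{exponential-quantity},
\[
\int_{M_p}8|\grad\phi|^2\,dV=8C_0\int_0^\infty F'(t)^2e^t\,dt,
\]
while two applications of Cauchy--Schwarz on each level set give $\int_{\Sigma_t}|\grad w|^{-1}\,da\ge|\Sigma_t|^2/(C_0e^t)\ge C_0^3e^{3t}/W(t)^2$, so combined with $W(t)\le\pi(2-e^{-t})^2$ one obtains
\[
\int_{M_p}\phi^6\,dV\ge\frac{C_0^3}{\pi^2}\int_0^\infty\frac{F(t)^6e^{3t}}{(2-e^{-t})^4}\,dt.
\]
The factor $C_0$ cancels in the resulting upper bound for the Yamabe quotient. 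After substituting $s=e^{-t}$ and then $v=s-1$, both remaining integrals become elementary via $v=\tan\theta$: one finds $\int_{-1}^0(1-v^2)^2/(v^2+1)^3\,dv=\pi/8$ and $\int_{-1}^0 v^2(v^2+3)^2/(v^2+1)^3\,dv=3\pi/8$, and the bound evaluates to exactly $6\pi^{4/3}=\sigma_2$. The real difficulty is not the final computation but the design of $F$: it must simultaneously saturate the two Cauchy--Schwarz inequalities and the monotonicity on the Schwarzschild-from-$\RP^3$ model and decay fast enough at infinity to be admissible. The formula above is essentially forced by these demands, and the success of the method rests on the fact that the resulting integral quotient comes out to exactly $\sigma_2$.
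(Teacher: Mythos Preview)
Your proposal is correct and follows essentially the same route as the paper: reduce to the scalar-flat asymptotically flat model $(M_p,g_{AF})$, invoke Bray--Neves to locate the outermost minimal $\Sigma$, solve for the harmonic $u$ on the exterior, and construct the test function as a profile composed with $w=-\log u$, estimating the numerator via Proposition~\ref{exponential-quantity} and the denominator via Cauchy--Schwarz together with Corollary~\ref{minimal-initial-data-estimate}.

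The only substantive difference is in how the profile is handled. The paper defines $f$ \emph{implicitly} as $u_0=G_p^{-1}$ read along the level sets of the model harmonic function, and then argues that the resulting one-dimensional quotient equals $\sigma_2$ simply because every inequality used in the estimates is an equality on the Schwarzschild model. You instead write down the profile explicitly,
\[
F(t)=\frac{e^{-t}(2-e^{-t})}{\sqrt{e^{-2t}-2e^{-t}+2}}=\frac{1-v^2}{\sqrt{1+v^2}}\quad(v=e^{-t}-1),
\]
and evaluate the two integrals directly. Your $F$ is indeed (a constant multiple of) the paper's $f$: writing $\rho=|x|$ on the mass-$m$ Schwarzschild exterior with horizon at $\rho=m/2$, one checks $F\propto \rho\big/\big((2\rho+m)\sqrt{m^2+4\rho^2}\big)$, which is exactly the Aubin--Talenti bubble divided by the Schwarzschild conformal factor, i.e.\ $G_p^{-1}$ in the model. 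So the two proofs coincide; yours trades the ``equality in the model'' soft argument for an explicit calculus verification, which has the virtue of being self-contained and confirming the final constant $6\pi^{4/3}=\sigma_2$ by direct computation. Your remark about extending $\phi$ by the constant $F(0)=1$ on the compact side of $\Sigma$ is also a point the paper leaves implicit; it only improves the quotient, so the bound is unaffected.
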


\begin{proof}
    We first consider the model case, when $M=\mathbb{RP}^3$, and $g_0$ is the standard round metric. As in \cite{bray2004classification}, $(\RP^3_p,g_{AF})$ can be identified with a spatial Schwarzschild manifold with antipodal identification on the horizon. Let $w_s=-\log(u_s)$, where we use $u_s$ to 
    denote the solution to \eqref{mian-equation} on $(\mathbb{RP}^3_{p},g_{AF})$. Then define
\begin{align}
    f(t)=u_0(\Sigma_t^s),
\end{align}
where $\Sigma_t^s=\{x\in (\mathbb{RP}^3_p,g_{AF}): w_s(x)=t\}$, and $u_0(x)=G_p(x)^{-1}$. Let $\phi_s=f\circ w_s$. As in \cite{bray2004classification},  Obata's theorem implies that $\phi_s$ is the optimal test function in the model and 
\[
\frac{\int_{\RP^3_p} 8\vert \grad \phi_s\vert^2 \, dV}{\left(\int_{\RP^3_p} \phi_s^6\, dV\right)^{1/3}} = \sigma_2. 
\]
Next we aim to transport this test function to an arbitrary $M$. 

For any $(M,g_0)$ satisfying Property A or Property B, adopting the same argument as in \cite{bray2004classification},
we know there exists an outermost minimizing $\Sigma\subset M_p$, which is the boundary of a minimizing hull. Using that as 
an initial condition, let $w=-\log(u)$, where $u$ is the solution to \eqref{mian-equation}. 
We can construct a test function $\phi\in H_{\text{loc}}^1(M_p,g_{AF})$ via
\begin{align}
    \phi(x)=f(w(x)).
\end{align}
It remains to estimate 
\[
\frac{\int_{M_p}8|\nabla \phi|^2dV}{(\int_{M_p}\phi^6dV)^{1/3}}.
\]
Estimating the numerator with the coarea formula, one obtains
\begin{equation}\label{numerator-estimate}
\begin{split}
    \int_{M_p}|\nabla\phi|^2dV&=\int_{M_p}f'(w(x))^2|\nabla w|^2dV\\
                              &=\int_0^{\infty}f'(t)^2(\int_{\Sigma_t}|\nabla w|da)dt\\
  &=C_0\int_0^{\infty}f'(t)e^tdt,
  \end{split}
\end{equation}
where $C_0=\int_{\Sigma_0}|\nabla w|\, da$, and we used Proposition \ref{exponential-quantity}.

 For the denominator, we need to use the coarea formula, H\"older's inequality, and   Corollary \ref{minimal-initial-data-estimate}. This gives 
\begin{equation}\label{denominator-estimate}
\begin{split}
    \int_{M_p}\phi^6\, dV&=\int_{M_p}f(w(x))^6\, dV\\
&=\int_0^{\infty}f(t)^6(\int_{\Sigma_t}|\nabla w|^{-1}\, da)\, dt\\
&\geq \int_0^{\infty}f(t)^6(\int_{\Sigma_t}|\nabla w|^2da)^{-2}(\int_{\Sigma_t}|\nabla w|da)^3\, dt\\
&\geq C_0^3\int_0^{\infty}f(t)^6e^{3t}W(t)^{-2}\, dt\\
&\geq \pi^{{-2}}C_0^3\int_0^{\infty}f(t)^6e^{3t}(2-e^{-t})^{-4}\, dt.
\end{split}
\end{equation}
Combining the previous estimates, we obtain
\begin{equation}
    \frac{\int_{M_p}8|\nabla \phi|^2dV}{(\int_{M_p}\phi^6dV)^{1/3}}\leq 
    \frac{8\int_0^{\infty}f'(t)e^tdt}{(\pi^{-2}\int_0^{\infty}f(t)^6e^{3t}(2-e^{-t})^{-4})^{1/3}}=\sigma_2.
\end{equation}
The second equality holds because all of the inequalities in \eqref{numerator-estimate} and \eqref{denominator-estimate} become equality in the model case. 
\end{proof}
\begin{remark}
For \eqref{denominator-estimate}, one needs to use the Lebesgue Monotone Convergence Theorem as in \cite{bray2004classification}.
\end{remark}
\bibliographystyle{plain}
\bibliography{biblio.bib}

\end{document}